\documentclass[10pt]{amsart}

\usepackage{amsfonts}
\usepackage{amsthm}
\usepackage{amsmath}
\usepackage{graphicx}
\usepackage{amscd,amssymb,amsthm}

\setlength{\paperwidth}{210mm} \setlength{\paperheight}{297mm}
\setlength{\oddsidemargin}{0mm} \setlength{\evensidemargin}{0mm}
\setlength{\topmargin}{-20mm} \setlength{\headheight}{10mm}
\setlength{\headsep}{13mm} \setlength{\textwidth}{160mm}
\setlength{\textheight}{240mm} \setlength{\footskip}{15mm}
\setlength{\marginparwidth}{0mm} \setlength{\marginparsep}{0mm}

\newcounter{minutes}\setcounter{minutes}{\time}
\divide\time by 60
\newcounter{hours}\setcounter{hours}{\time}
\multiply\time by 60 \addtocounter{minutes}{-\time}

\title{On a sum of modified Bessel functions}

\author[\'Arp\'ad Baricz]{\'Arp\'ad Baricz}
\address{Department of Economics, Babe\c{s}-Bolyai University, Cluj-Napoca 400591, Romania} \email{bariczocsi@yahoo.com}

\author{Tibor K. Pog\'any}
\address{Faculty of Maritime Studies, University of Rijeka, Rijeka 51000, Croatia}
\email{poganj@brod.pfri.hr}

\keywords{Modified Bessel functions, concentration bounds, functional inequalities}
\subjclass[2010]{39B62, 33C10, 33C15}

\newtheorem{theorem}{Theorem}

\begin{document}

\def\thefootnote{}
\footnotetext{ \texttt{File:~\jobname .tex,
          printed: \number\year-0\number\month-\number\day,
          \thehours.\ifnum\theminutes<10{0}\fi\theminutes}
} \makeatletter\def\thefootnote{\@arabic\c@footnote}\makeatother

\maketitle

\begin{abstract}
In this paper we consider a sum of modified Bessel functions of the first kind of which particular case is used in the study of Kanter's sharp modified Bessel function bound for concentrations of some sums of independent symmetric random vectors. We present some monotonicity and convexity properties for that sum of modified Bessel functions of the first kind, as well as some Tur\'an type inequalities, lower and upper bounds. Moreover, we point out an error in Kanter's paper \cite{kanter} and at the end of the paper we pose an open problem, which may be of interest for further research.
\end{abstract}

\section{Introduction}

Special functions like modified Bessel functions of the first and second kind, $I_{\nu}$ and $K_{\nu},$ are frequently used in probability theory and statistics, see for example the papers of Fotopoulos and Venkata \cite{venkata}, Kanter \cite{kanter}, Marchand and Perron \cite{marchand0,marchand1}, Robert \cite{robert}, Yuan and Kalbfleisch \cite{yuan}, to mention a few. In \cite{kanter} Kanter deduced a sharp bound for the probability that a sum of independent symmetric random vectors lies in a symmetric convex set, by improving in one dimension an inequality first proved by Kolmogorov. For a very recently deduced concentration inequality for sums of independent isotropic random vectors we refer to the paper of Cranston and Molchanov \cite{cranston}. In deducing the above mentioned result in \cite{kanter} the function $\Phi:(0,\infty)\to(0,\infty),$ defined by $\Phi(x)=e^{-x}\left[I_0(x)+I_1(x)\right],$ plays an important role. Some properties of $\Phi$ were deduced in \cite[Lemma 4.4]{kanter} and \cite[Lemma 4.5]{kanter}. Recently, Mattner and Roos \cite{roos}, by using other properties of the function $\Phi$, shortened the proof of \cite[Theorem 4.1]{kanter}, in which Kanter deduced his concentration bound for sums of independent random vectors. The properties proved in \cite[Lemma 1.4]{roos} complement the study of $\Phi$ from \cite{kanter}.

Our aim in this paper is twofold: in one hand to generalize the results on $\Phi$ from \cite{kanter,roos} to the function $\Phi_{\nu}:(0,\infty)\to(0,\infty),$ defined by $$\Phi_{\nu}(x)=e^{-x}x^{-\nu}\left[I_{\nu}(x)+I_{\nu+1}(x)\right],$$ and on the other hand to point out a gap in the proof of \cite[Lemma 4.4]{kanter}, which in turn implies that Kanter's proof of \cite[Theorem 4.1]{kanter} is not complete. To achieve our goal, we present some monotonicity and convexity properties, lower and upper bounds, and Tur\'an type inequalities for the function $\Phi_{\nu}.$ In the study of the Tur\'an type inequalities for the function $\Phi_{\nu}$ one of the key tools is the Neumann integral formula concerning the product of two modified Bessel functions with different parameters. Moreover, at the end of the paper we pose an open problem, which may be of interest for further research.

\section{Monotonicity and convexity properties of the function $\Phi_{\nu}$}
\setcounter{equation}{0}

Before we present the main results of this paper we recall some definitions, which will be used in the sequel. A function
$f:(0,\infty)\rightarrow\mathbb{R}$ is said to be completely monotonic if $f$ has derivatives of all orders and satisfies
   $$(-1)^mf^{(m)}(x)\geq 0$$
for all $x>0$ and $m\in\{0,1,2,\dots\}.$ A function $g \colon (0,\infty)\to(0,\infty)$ is said to be logarithmically convex, or simply log-convex, if its natural logarithm $\ln g$ is convex, that is, for all $x,y>0$ and $\lambda\in[0,1]$ we have
   $$g(\lambda x+(1-\lambda)y) \leq \left[g(x)\right]^{\lambda}\left[g(y)\right]^{1-\lambda}.$$
A similar characterization of log-concave functions also holds. We also note that every completely monotonic function is log-convex, see
\cite[p. 167]{wider}. Now, by definition, a function $h \colon (0,\infty)\rightarrow(0,\infty)$ is said to be geometrically (or multiplicatively)
convex if it is convex with respect to the geometric mean, that is, if for all $x,y>0$ and all $\lambda\in[0,1]$ the inequality
   $$h(x^{\lambda}y^{1-\lambda}) \leq[h(x)]^{\lambda}[h(y)]^{1-\lambda}$$
holds. The function $h$ is called geometrically concave if the above inequality is reversed. Observe that, actually the
geometrical convexity of a function $h$ means that the function $\ln h$ is a convex function of $\ln x$ in
the usual sense. We also note that the differentiable function $g$ is log-convex (log-concave) if and only if
$x \mapsto g'(x)/g(x)$ is increasing (decreasing), while the differentiable function $h$ is geometrically convex (concave) if
and only if the function $x \mapsto xh'(x)/h(x)$ is increasing (decreasing). See for example \cite{geom} for more details on geometrically convex (concave) functions and their relations with continuous univariate distributions.

The following result is motivated by \cite[Lemma 4.5]{kanter} and \cite[Lemma 1.4]{roos}. Part {\bf a} of Theorem 1 generalizes the statement that $\Phi$ is completely monotonic, see \cite[Lemma 1.4]{roos}, while part {\bf d} provides a generalization of \cite[Lemma 4.5]{kanter}. The right-hand side of \eqref{bound} extends inequality \cite[eq. (9)]{roos}.

\begin{theorem}
The following assertions are valid:
\begin{enumerate}
\item[\bf a.] $\Phi_{\nu}$ is completely monotonic on $(0,\infty)$ for all $\nu\geq -\frac{1}{2};$
\item[\bf b.] $\Phi_{\nu}$ is log-convex and geometrically concave on $(0,\infty)$ for all $\nu\geq -\frac{1}{2};$
\item[\bf c.] $\nu\mapsto \Phi_{\nu}(x)$ is decreasing on $[0,\infty)$ for all $x>0;$
\item[\bf d.] $x\mapsto x^{\nu+\frac{1}{2}}\Phi_{\nu}(x)$ is increasing on $(0,\infty)$ for all $\nu\geq-\frac{1}{2};$
\item[\bf e.] $x\mapsto \left(x+\frac{\nu}{2}\right)^{\nu+\frac{1}{2}}\Phi_{\nu}(x)$ is increasing on $(0,\infty)$ for all $\nu\geq-\frac{1}{2};$
\item[\bf f.] $x\mapsto \left(x+\frac{\nu}{2}+\frac{1}{4}\right)^{\nu+\frac{1}{2}}\Phi_{\nu}(x)$ is increasing on $(0,\infty)$ for all $\nu\geq-\frac{1}{2};$
\item[\bf g.] the inequality
\begin{equation}\label{bound}
\frac{\left(\nu+\frac{1}{2}\right)^{\nu+\frac{1}{2}}}
{2^{2\nu+\frac{1}{2}}\Gamma(\nu+1)\left(x+\frac{\nu}{2}+\frac{1}{4}\right)^{\nu+\frac{1}{2}}}<
\Phi_{\nu}(x)<\sqrt{\frac{2}{\pi}}\cdot\frac{1}
{\left(x+\frac{\nu}{2}+\frac{1}{4}\right)^{\nu+\frac{1}{2}}}\end{equation}
is valid for all $\nu>-\frac{1}{2}$ and $x>0;$
\end{enumerate}
\end{theorem}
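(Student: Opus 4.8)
The plan is to build the whole theorem on a single integral representation of $\Phi_{\nu}$. Starting from Poisson's formula $I_{\mu}(x)=\frac{(x/2)^{\mu}}{\sqrt{\pi}\,\Gamma(\mu+\frac{1}{2})}\int_{-1}^{1}(1-t^{2})^{\mu-\frac{1}{2}}e^{xt}\,dt$, valid for $\mu>-\frac{1}{2}$, applied first with $\mu=\nu$ and then, after one integration by parts in $t$, with $\mu=\nu+1$, one obtains for every $\nu>-\frac{1}{2}$ and $x>0$
\begin{equation*}
\Phi_{\nu}(x)=\frac{1}{2^{\nu}\sqrt{\pi}\,\Gamma(\nu+\frac{1}{2})}\int_{-1}^{1}(1+t)(1-t^{2})^{\nu-\frac{1}{2}}e^{-(1-t)x}\,dt ,
\end{equation*}
while $\Phi_{-1/2}\equiv\sqrt{2/\pi}$ by direct evaluation of $I_{\pm 1/2}$. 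Differentiating under the integral sign (the factor $(1+t)(1-t)=1-t^{2}$ collapses, and Poisson's formula with $\mu=\nu+1$ identifies the remaining integral) yields the key identity $\Phi_{\nu}'(x)=-\frac{2\nu+1}{x}\,e^{-x}x^{-\nu}I_{\nu+1}(x)$, which is the engine for parts b and d--f.

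From here part a is immediate: for each fixed $t\in[-1,1]$ the map $x\mapsto e^{-(1-t)x}$ is completely monotonic since $1-t\ge 0$, the weight $(1+t)(1-t^{2})^{\nu-\frac{1}{2}}$ is nonnegative, and a nonnegative mixture of completely monotonic functions is completely monotonic (the case $\nu=-\frac{1}{2}$ being trivial). For part b, log-convexity follows because every completely monotonic function is log-convex; for geometric concavity I would use the key identity to write $x\Phi_{\nu}'(x)/\Phi_{\nu}(x)=-(2\nu+1)\,w_{\nu}(x)/(1+w_{\nu}(x))$ with $w_{\nu}=I_{\nu+1}/I_{\nu}$, so that, $t\mapsto t/(1+t)$ being increasing, it suffices that $w_{\nu}$ be increasing on $(0,\infty)$; this last fact I would extract from the Riccati equation $w_{\nu}'=1-w_{\nu}^{2}-\frac{2\nu+1}{x}w_{\nu}$, since $w_{\nu}(0^{+})=0$ with $w_{\nu}'>0$ near $0$, and any interior critical point of $w_{\nu}$ would be a strict local minimum (there $w_{\nu}''=\frac{2\nu+1}{x^{2}}w_{\nu}>0$), forcing $w_{\nu}'>0$ throughout. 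Part c I would read off the ascending series $\Phi_{\nu}(x)=e^{-x}2^{-\nu}\sum_{n\ge 0}(x/2)^{n}/\bigl(\lfloor n/2\rfloor!\,\Gamma(\lceil n/2\rceil+\nu+1)\bigr)$: for fixed $x>0$ this is a positive combination of the coefficients $2^{-\nu}/\Gamma(m+\nu+1)$, $m\ge 0$, each of which is strictly decreasing in $\nu$ on $[0,\infty)$ because its logarithmic $\nu$-derivative is $-\log 2-\psi(m+\nu+1)<0$ (as $\psi(t)\ge\psi(1)=-\gamma>-\log 2$ for $t\ge 1$).

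The crux, and the step I expect to cost the most, is the Tur\'an-type bound
\begin{equation*}
\frac{I_{\nu+1}(x)}{I_{\nu}(x)}<\frac{x}{x+\nu+\frac{1}{2}}\qquad\bigl(\nu>-\tfrac{1}{2},\ x>0\bigr).
\end{equation*}
I would prove it by a first-crossing argument against $q(x)=x/(x+\nu+\frac{1}{2})$: a short computation gives $q'(x)-\bigl(1-q(x)^{2}-\frac{2\nu+1}{x}q(x)\bigr)=\frac{(\nu+\frac{1}{2})(\nu+\frac{3}{2})}{(x+\nu+\frac{1}{2})^{2}}>0$, so $q$ strictly supersolves the Riccati equation obeyed by $w_{\nu}$; near $0$ one has $w_{\nu}(x)\sim x/(2\nu+2)<x/(\nu+\frac{1}{2})\sim q(x)$, and at a hypothetical first point $x_{0}$ with $w_{\nu}(x_{0})=q(x_{0})$ one would have $w_{\nu}'(x_{0})=1-q(x_{0})^{2}-\frac{2\nu+1}{x_{0}}q(x_{0})<q'(x_{0})$, contradicting $w_{\nu}'(x_{0})\ge q'(x_{0})$. (Alternatively one may invoke the stronger classical bound $I_{\nu+1}(x)/I_{\nu}(x)<x/\bigl(\nu+\frac{1}{2}+\sqrt{(\nu+\frac{1}{2})^{2}+x^{2}}\bigr)$.) Parts d, e, f then all follow from this: by the key identity, $x\mapsto(x+a)^{\nu+\frac{1}{2}}\Phi_{\nu}(x)$ is increasing on $(0,\infty)$ if and only if $I_{\nu}(x)/I_{\nu+1}(x)\ge(x+2a)/x$, and for $a=0,\ \frac{\nu}{2},\ \frac{\nu}{2}+\frac{1}{4}$ the right-hand side is at most $(x+\nu+\frac{1}{2})/x$, so the Tur\'an bound settles all three (with room to spare for d and e); the degenerate case $\nu=-\frac{1}{2}$ is trivial because $\Phi_{-1/2}$ is constant.

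Finally, part g is harvested from part f. Since $g(x)=(x+\frac{\nu}{2}+\frac{1}{4})^{\nu+\frac{1}{2}}\Phi_{\nu}(x)$ is strictly increasing on $(0,\infty)$ for $\nu>-\frac{1}{2}$, we have $g(0^{+})<g(x)<g(\infty)$ for all $x>0$. From the series $\Phi_{\nu}(0^{+})=1/(2^{\nu}\Gamma(\nu+1))$, so $g(0^{+})=\bigl(\frac{2\nu+1}{4}\bigr)^{\nu+\frac{1}{2}}/\bigl(2^{\nu}\Gamma(\nu+1)\bigr)$, which simplifies to $(\nu+\frac{1}{2})^{\nu+\frac{1}{2}}/\bigl(2^{2\nu+\frac{1}{2}}\Gamma(\nu+1)\bigr)$, the left-hand constant in \eqref{bound}; and from $I_{\nu}(x)\sim I_{\nu+1}(x)\sim e^{x}/\sqrt{2\pi x}$ as $x\to\infty$ one gets $\Phi_{\nu}(x)\sim\sqrt{2/\pi}\,x^{-\nu-\frac{1}{2}}$, whence $g(\infty)=\sqrt{2/\pi}$. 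Dividing through by $(x+\frac{\nu}{2}+\frac{1}{4})^{\nu+\frac{1}{2}}$ reproduces exactly \eqref{bound}, the inequalities being strict because $g$ is strictly monotone.
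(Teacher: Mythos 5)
Your proposal is correct, and its skeleton coincides with the paper's: the Poisson-type integral representation of $\Phi_{\nu}$ (your $(1+t)e^{-(1-t)x}$ kernel is the paper's $(1-t)e^{-(1+t)x}$ kernel after $t\mapsto -t$) gives part a as a mixture of completely monotonic exponentials; the logarithmic-derivative identity $x\Phi_{\nu}'(x)/\Phi_{\nu}(x)=-(2\nu+1)\bigl[I_{\nu}(x)/I_{\nu+1}(x)+1\bigr]^{-1}$ drives parts b and d--f via lower bounds on $I_{\nu}/I_{\nu+1}$; the power series handles c; and the value of $\bigl(x+\frac{\nu}{2}+\frac14\bigr)^{\nu+\frac12}\Phi_{\nu}(x)$ at $0^{+}$ gives the lower bound in g. Where you genuinely diverge is in the ingredients. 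First, you prove the two Bessel-ratio facts yourself — the monotonicity of $w_{\nu}=I_{\nu+1}/I_{\nu}$ (needed for geometric concavity) via the critical-point argument on the Riccati equation $w'=1-w^{2}-\frac{2\nu+1}{x}w$, and the bound $I_{\nu}/I_{\nu+1}>1+(\nu+\frac12)/x$ via a supersolution/first-crossing argument against $q(x)=x/(x+\nu+\frac12)$ — whereas the paper cites Watson/Yuan--Kalbfleisch, Soni, N{\aa}sell and Segura; your single Tur\'an-type bound subsumes all three cited inequalities and settles d, e, f at once, which is a tidier organization. Second, in part c your observation that $\psi(m+\nu+1)\ge\psi(1)=-\gamma>-\log 2$ for $m\ge 0$, $\nu\ge 0$ disposes of all coefficients uniformly, avoiding the paper's case split and its appeal to Batir's inequality $\psi(\nu+1)>\log(\nu+\frac12)$. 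Third, for the upper bound in g the paper returns to the integral representation and estimates with $e^{-x}>1-x$, whereas you simply compute $\lim_{x\to\infty}\bigl(x+\frac{\nu}{2}+\frac14\bigr)^{\nu+\frac12}\Phi_{\nu}(x)=\sqrt{2/\pi}$ from the standard asymptotics of $I_{\nu}$ and invoke the strict monotonicity from part f; this is shorter and has the added benefit of showing that the constant $\sqrt{2/\pi}$ is best possible. All the computations I checked (the Riccati identities, the equivalence of ``$(x+a)^{\nu+\frac12}\Phi_{\nu}$ increasing'' with $I_{\nu}/I_{\nu+1}\ge (x+2a)/x$, and the evaluation of the endpoint constants) are correct.
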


\begin{proof}
Observe that since
$$I_{-\frac{1}{2}}(x)=\sqrt{\frac{2}{\pi x}}\cosh x\ \ \ \ \mbox{and}\ \ \ \ I_{\frac{1}{2}}(x)=\sqrt{\frac{2}{\pi x}}\sinh x,$$
we have that $\Phi_{-\frac{1}{2}}(x)=\sqrt{\frac{2}{\pi}}.$ Thus, in what follows we assume that $\nu>-\frac{1}{2}.$

{\bf a.} We proceed somewhat similar as in the case of complete monotonicity of the function $x\mapsto e^{-x}x^{-\nu}I_{\nu}(x).$ About this function we know that it is completely monotonic on $(0,\infty)$ for all $\nu\geq -\frac{1}{2}.$ The case $\nu>-\frac{1}{2}$ was proved by N{\aa}sell \cite{nasell} and the case $\nu=-\frac{1}{2}$ was pointed out in \cite{bariczpams}. In \cite{nasell} N{\aa}sell used the integral representation \cite[p. 79]{watsongn}
\begin{equation}\label{mod}
I_{\nu}(x)=\frac{\left(\frac{1}{2}x\right)^{\nu}}{\sqrt{\pi}\Gamma\left(\nu+\frac{1}{2}\right)}
\int_{-1}^1\left(1-t^2\right)^{\nu-\frac{1}{2}}e^{-xt}dt,\ \ \ \nu>-\frac{1}{2},\end{equation}
to prove the above mentioned complete monotonicity. Now, if we change $\nu$ to $\nu+1$ in \eqref{mod} and we use integration by parts, we obtain
\begin{align*}
I_{\nu+1}(x)&=\frac{\left(\frac{1}{2}x\right)^{\nu+1}}{\sqrt{\pi}\Gamma\left(\nu+\frac{3}{2}\right)}
\int_{-1}^1\left(1-t^2\right)^{\nu+\frac{1}{2}}e^{-xt}dt\\
&=\frac{\frac{1}{2}\left(\frac{1}{2}x\right)^{\nu}}{\sqrt{\pi}\Gamma\left(\nu+\frac{3}{2}\right)}
\int_{-1}^1\left(1-t^2\right)^{\nu+\frac{1}{2}}xe^{-xt}dt\\
&=-\frac{\left(\frac{1}{2}x\right)^{\nu}}{\sqrt{\pi}\Gamma\left(\nu+\frac{1}{2}\right)}
\int_{-1}^1\left(1-t^2\right)^{\nu-\frac{1}{2}}te^{-xt}dt,\end{align*}
which in view of \eqref{mod} yields that
\begin{equation}\label{modsum}
\Psi_{\nu}(x)=\sqrt{\pi}2^{\nu}\Gamma\left(\nu+\frac{1}{2}\right)\Phi_{\nu}(x)=\int_{-1}^1(1-t)\left(1-t^2\right)^{\nu-\frac{1}{2}}e^{-x(1+t)}dt.
\end{equation}
Consequently, we obtain
$$(-1)^n\left[\Psi_{\nu}(x)\right]^{(n)}=\int_{-1}^1(1-t)(1+t)^n\left(1-t^2\right)^{\nu-\frac{1}{2}}e^{-x(1+t)}dt>0$$
for all $\nu>-\frac{1}{2},$ $x>0$ and $n\in\{0,1,\dots\}.$ Thus, the function $\Psi_{\nu}$ is completely monotonic on $(0,\infty)$ for all $\nu>-\frac{1}{2},$ as well as the function $\Phi_{\nu}.$

{\bf b.} The first part of the assertion follows from part {\bf a}. Namely, it is known that every completely monotonic function is log-convex, see \cite[p. 167]{wider}.

Now, for convenience let us introduce the notation $$\phi_{\nu}(x)=x^{-\nu}\left[I_{\nu}(x)+I_{\nu+1}(x)\right].$$ Then by using the recurrence relations \cite[p. 79]{watsongn}
$$\left[x^{-\nu}I_{\nu}(x)\right]'=x^{-\nu}I_{\nu+1}(x),\ \ \ \ I_{\nu+2}(x)-I_{\nu}(x)=-\frac{2\nu+1}{x}I_{\nu+1}(x)$$
we have
$$\phi_{\nu}'(x)-\phi_{\nu}(x)=-(2\nu+1)x^{-(\nu+1)}I_{\nu+1}(x)$$
and consequently
$$\frac{\Phi_{\nu}'(x)}{\Phi_{\nu}(x)}=\frac{\phi_{\nu}'(x)-\phi_{\nu}(x)}{\phi_{\nu}(x)}=-\frac{2\nu+1}{x}\frac{I_{\nu+1}(x)}{I_{\nu}(x)+I_{\nu+1}(x)},$$
that is,
\begin{equation}\label{logder}
\frac{x\Phi_{\nu}'(x)}{\Phi_{\nu}(x)}=-(2\nu+1)\frac{1}{\frac{I_{\nu}(x)}{I_{\nu+1}(x)}+1}.
\end{equation}
Since the function $x\mapsto I_{\nu+1}(x)/I_{\nu}(x)$ is increasing on $(0,\infty)$ for all $\nu\geq -\frac{1}{2}$ (see \cite{swatson2} or \cite[p. 446]{yuan}), it follows that $x\mapsto 1/\left[{I_{\nu}(x)}/{I_{\nu+1}(x)}+1\right]$ is increasing on $(0,\infty)$ for all $\nu\geq -\frac{1}{2}.$ Now, by using \eqref{logder} clearly the function
$x\mapsto {x\Phi_{\nu}'(x)}/{\Phi_{\nu}(x)}$ is decreasing on $(0,\infty)$ for all $\nu\geq -\frac{1}{2},$ as we required.

{\bf c.} By using the infinite series representation of the modified Bessel function of the first kind \cite[p. 77]{watsongn}
$$I_{\nu}(x)=\sum_{n\geq0}\frac{\left(\frac{1}{2}x\right)^{2n+\nu}}{n!\Gamma(n+\nu+1)}$$ we have
\begin{equation}\label{power}\phi_{\nu}(2x)=\sum_{n\geq 0}a_n(\nu)x^{2n}+\sum_{n\geq 0}b_n(\nu)x^{2n+1},\end{equation}
where $$a_n(\nu)=\frac{1}{2^{\nu}n!\Gamma(n+\nu+1)}\ \ \ \ \mbox{and} \ \ \ \ b_n(\nu)=\frac{1}{2^{\nu}n!\Gamma(n+\nu+2)}.$$
Thus, to prove that the function $\nu\mapsto \Phi_{\nu}(x)=e^{-x}\phi_{\nu}(x)$ is decreasing on $[0,\infty)$ for all $x>0,$ it is enough to show that $\nu\mapsto a_n(\nu)$ and $\nu\mapsto b_n(\nu)$ are decreasing on $[0,\infty)$ for all $n\in\{0,1,\dots\},$ that is, for all $\nu\geq0$ and $n\in\{0,1,\dots\}$ we have
\begin{equation}\label{annu}\frac{\partial\log(a_n(\nu))}{\partial\nu}=\frac{1}{a_n(\nu)}\frac{\partial a_n(\nu)}{\partial\nu}=-\log 2-\psi(n+\nu+1)<0,\end{equation}
\begin{equation}\label{bnnu}\frac{\partial\log(b_n(\nu))}{\partial\nu}=\frac{1}{b_n(\nu)}\frac{\partial b_n(\nu)}{\partial\nu}=-\log 2-\psi(n+\nu+2)<0,\end{equation}
where $\psi(x)=\Gamma'(x)/\Gamma(x)$ denotes the digamma function. Since $\psi(x)>0$ for all $x>x^*,$ where $x^*\simeq 1.461632144\dots$ is the abscissa of the minimum of the $\Gamma$ function, the inequality \eqref{annu} clearly holds for all $\nu\geq0$ and $n\in\{1,2,\dots\},$ while the inequality \eqref{bnnu} clearly holds for all $\nu\geq0$ and $n\in\{0,1,\dots\}.$ Thus, we just need to verify the inequality \eqref{annu} when $n=0,$ that is, $\psi(\nu+1)+\log2>0$ for $\nu\geq0.$ When $\nu=0$ this is true, since $\psi(1)\simeq-0.5772156649\dots$ and $\log2=0.6931471805{\dots}.$ Now, suppose that $\nu>0.$ According to Batir \cite[Lemma 1.7]{batir} the inequality $\psi(\nu+1)>\log\left(\nu+\frac{1}{2}\right)$ is valid for all $\nu>0.$ Thus $\psi(\nu+1)+\log2>\log(2\nu+1)>0$ for all $\nu>0,$ as we required.

{\bf d.} \& {\bf e.} \& {\bf f.} We shall use \eqref{logder} and the well-known inequalities of Soni \cite{soni} and N{\aa}sell \cite{nasell2}. Namely, by using \eqref{logder} and Soni's inequality $I_{\nu}(x)>I_{\nu+1}(x),$ which holds for all $\nu\geq -\frac{1}{2}$ and $x>0,$ we obtain for $\nu\geq -\frac{1}{2}$ and $x>0$ the inequality
$$\left[\log\Phi_{\nu}(x)\right]'=\frac{\Phi_{\nu}'(x)}{\Phi_{\nu}(x)}>-\left(\nu+\frac{1}{2}\right)\frac{1}{x}=-\left(\log x^{\nu+\frac{1}{2}}\right)'.$$
Similarly, by using N{\aa}sell's inequality \begin{equation}\label{nasel}I_{\nu}(x)>\left(1+\frac{\nu}{x}\right)I_{\nu+1}(x),\end{equation} which holds for all $\nu>-1$ and $x>0,$ we obtain that
$$\left[\log\Phi_{\nu}(x)\right]'=\frac{\Phi_{\nu}'(x)}{\Phi_{\nu}(x)}>-\left(\nu+\frac{1}{2}\right)\frac{1}{x+\frac{\nu}{2}}=-\left[\log \left(x+\frac{\nu}{2}\right)^{\nu+\frac{1}{2}}\right]',$$
where $\nu\geq -\frac{1}{2}$ and $x>0.$ These inequalities imply that the functions $x\mapsto \log\left[x^{\nu+\frac{1}{2}}\Phi_{\nu}(x)\right]$ and $x\mapsto \log\left[\left(x+\frac{\nu}{2}\right)^{\nu+\frac{1}{2}}\Phi_{\nu}(x)\right]$ are increasing on $(0,\infty)$ for all $\nu\geq-\frac{1}{2}.$

Now, for $x>0$ and $\nu\geq0$ let us consider the inequality \cite[eq. (20)]{segura}
$$\frac{I_{\nu}(x)}{xI_{\nu-1}(x)}<\frac{1}{\nu-\frac{1}{2}+\sqrt{x^2+\left(\nu-\frac{1}{2}\right)^2}},$$
which implies that
$$\frac{I_{\nu}(x)}{I_{\nu+1}(x)}>\frac{\sqrt{x^2+\left(\nu+\frac{1}{2}\right)^2}}{x}+\frac{\nu+\frac{1}{2}}{x}>1+\frac{\nu+\frac{1}{2}}{x},$$
where $x>0$ and $\nu\geq-1.$ Observe that this improves N{\aa}sell's inequality \eqref{nasel} and implies the inequality
$$\left[\log\Phi_{\nu}(x)\right]'=\frac{\Phi_{\nu}'(x)}{\Phi_{\nu}(x)}>-\left(\nu+\frac{1}{2}\right)\frac{1}{x+\frac{\nu}{2}+\frac{1}{4}}=-\left[\log \left(x+\frac{\nu}{2}+\frac{1}{4}\right)^{\nu+\frac{1}{2}}\right]',$$
where $\nu\geq -\frac{1}{2}$ and $x>0.$ Thus, the function $x\mapsto \log\left[\left(x+\frac{\nu}{2}+\frac{1}{4}\right)^{\nu+\frac{1}{2}}\Phi_{\nu}(x)\right]$ is increasing on $(0,\infty)$ for all $\nu\geq-\frac{1}{2},$ as we required.

{\bf g.} It is known that $e^{-x}>1-x$ for $x>0$ and thus
$$\left(\frac{2-t}{t}\right)^{\nu+\frac{1}{2}}<\left(\frac{2}{t}\right)^{\nu+\frac{1}{2}}e^{-\frac{t}{2}\left(\nu+\frac{1}{2}\right)}$$
for all $\nu>-\frac{1}{2}$ and $t\in(0,2].$ By using this inequality together with \eqref{modsum} we obtain
\begin{align*}\Phi_{\nu}(x)&=\frac{1}{\sqrt{\pi}2^{\nu}\Gamma\left(\nu+\frac{1}{2}\right)}\int_0^2\left(\frac{2-t}{t}\right)^{\nu+\frac{1}{2}}t^{2\nu}e^{-xt}dt\\
&<\frac{1}{\sqrt{\pi}2^{\nu}\Gamma\left(\nu+\frac{1}{2}\right)}\int_0^2\left(\frac{2}{t}\right)^{\nu+\frac{1}{2}}e^{-\frac{t}{2}\left(\nu+\frac{1}{2}\right)}t^{2\nu}e^{-xt}dt\\
&=\sqrt{\frac{2}{\pi}}\frac{1}{\Gamma\left(\nu+\frac{1}{2}\right)}\int_0^2t^{\nu-\frac{1}{2}}e^{-\left(x+\frac{\nu}{2}+\frac{1}{4}\right)t}dt\\
&<\sqrt{\frac{2}{\pi}}\frac{1}{\Gamma\left(\nu+\frac{1}{2}\right)}\int_0^{\infty}t^{\nu-\frac{1}{2}}e^{-\left(x+\frac{\nu}{2}+\frac{1}{4}\right)t}dt\\
&=\sqrt{\frac{2}{\pi}}\cdot\frac{1}
{\left(x+\frac{\nu}{2}+\frac{1}{4}\right)^{\nu+\frac{1}{2}}},\end{align*}
where $\nu>-\frac{1}{2}$ and $x>0.$ Here in the last step we used the well-known formula
$$\int_0^{\infty}t^{\alpha}e^{-\beta t}dt=\frac{\Gamma(\alpha+1)}{\beta^{\alpha+1}},$$
which is valid for all $\alpha>-1$ and $\beta>0.$

Now, for the left-hand side of \eqref{bound} we use part {\bf f} together with \eqref{power} for $x=0.$ Namely, for all $x>0$ and $\nu>-\frac{1}{2}$ we have
\begin{align*}\left(x+\frac{\nu}{2}+\frac{1}{4}\right)^{\nu+\frac{1}{2}}\Phi_{\nu}(x)&>\frac{1}{2^{\nu+\frac{1}{2}}}\left(\nu+\frac{1}{2}\right)^{\nu+\frac{1}{2}}\Phi_\nu(0)\\&=
\frac{1}{2^{\nu+\frac{1}{2}}}\left(\nu+\frac{1}{2}\right)^{\nu+\frac{1}{2}}\frac{1}{2^{\nu}\Gamma(\nu+1)}.\end{align*}
\end{proof}

The next result is motivated by the Tur\'an type inequalities for modified Bessel functions of the first and second kind. For more details see the recent paper \cite{baricz} and the references therein.

\begin{theorem}
The function $\nu\mapsto \Phi_{\nu}(x)$ is log-concave on $(-1,\infty)$ for $x>0,$ while the function $\nu\mapsto \Psi_{\nu}(x)$ is completely monotonic and log-convex on $\left(-\frac{1}{2},\infty\right)$ for $x>0.$ Moreover, the next Tur\'an type inequality is valid for all $\nu>\frac{1}{2}$ and $x>0$
\begin{equation}\label{turan}0<\left[\Phi_{\nu}(x)\right]^2-\Phi_{\nu-1}(x)\Phi_{\nu+1}(x)\leq\frac{1}{\nu+\frac{1}{2}}\left[\Phi_{\nu}(x)\right]^2.\end{equation}
In addition, the left-hand side of \eqref{turan} holds true for all $\nu>-1$ and $x>0.$
\end{theorem}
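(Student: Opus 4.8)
The plan is to treat the three assertions in the order in which they depend on one another: first the complete monotonicity of $\nu\mapsto\Psi_\nu(x)$, which at once yields its log-convexity and the right-hand inequality of \eqref{turan}; then the log-concavity of $\nu\mapsto\Phi_\nu(x)$, which yields the left-hand inequality.

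For the statement about $\Psi_\nu$, I would substitute $s=1+t$ in \eqref{modsum} to get $\Psi_\nu(x)=\int_0^2(2-s)^{\nu+\frac12}s^{\nu-\frac12}e^{-xs}\,ds$, valid for $\nu>-\frac12$, $x>0$. Because $(2-s)s=1-(1-s)^2\le1$ on $[0,2]$, we have $\ln\big((2-s)s\big)\le0$ there, so differentiating $n$ times in $\nu$ under the integral sign,
\[
(-1)^n\frac{\partial^n}{\partial\nu^n}\Psi_\nu(x)=\int_0^2(2-s)^{\nu+\frac12}s^{\nu-\frac12}\Big(-\ln\big((2-s)s\big)\Big)^{n}e^{-xs}\,ds\ge0
\]
for all $n\ge0$; hence $\nu\mapsto\Psi_\nu(x)$ is completely monotonic, and therefore log-convex, on $\big(-\frac12,\infty\big)$. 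The right-hand side of \eqref{turan} then follows formally: from $\Phi_\nu(x)=\Psi_\nu(x)\big/\big(\sqrt{\pi}\,2^\nu\Gamma(\nu+\tfrac12)\big)$ together with $\Gamma(\nu+\tfrac12)^2=\tfrac{\nu-1/2}{\nu+1/2}\,\Gamma(\nu-\tfrac12)\Gamma(\nu+\tfrac32)$, the claimed bound $\Phi_\nu(x)^2-\Phi_{\nu-1}(x)\Phi_{\nu+1}(x)\le\tfrac1{\nu+1/2}\Phi_\nu(x)^2$ reduces, after cancelling the common factor $\tfrac{\nu-1/2}{\nu+1/2}$, to $\Psi_{\nu-1}(x)\Psi_{\nu+1}(x)\ge\Psi_\nu(x)^2$, i.e. the log-convexity just obtained (this uses $\nu>\tfrac12$, so that $\nu-1>-\tfrac12$).

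For the log-concavity of $\nu\mapsto\Phi_\nu(x)$, since $\Phi_\nu(x)=e^{-x}x^{-\nu}\big(I_\nu(x)+I_{\nu+1}(x)\big)$ with $e^{-x}x^{-\nu}$ log-affine in $\nu$, it suffices to prove $g(\nu):=I_\nu(x)+I_{\nu+1}(x)$ is log-concave in $\nu$. Expanding $g(\nu_1)g(\nu_2)$ into the four products $I_\alpha(x)I_\beta(x)$ and applying Neumann's formula $I_\alpha(x)I_\beta(x)=\tfrac2\pi\int_0^{\pi/2}I_{\alpha+\beta}(2x\cos\theta)\cos\big((\alpha-\beta)\theta\big)\,d\theta$ to each, the two mixed products combine through $\cos\big((d-1)\theta\big)+\cos\big((d+1)\theta\big)=2\cos(d\theta)\cos\theta$, so that with $d=\nu_1-\nu_2$, $\nu=\tfrac{\nu_1+\nu_2}{2}$ and $y=2x\cos\theta$,
\[
g(\nu_1)g(\nu_2)=\frac2\pi\int_0^{\pi/2}\cos(d\theta)\Big[I_{2\nu}(y)+2\cos\theta\,I_{2\nu+1}(y)+I_{2\nu+2}(y)\Big]\,d\theta .
\]
The case $\nu_1=\nu_2$ is the same formula with $\cos(d\theta)$ replaced by $1$, whence
\[
g(\nu)^2-g(\nu_1)g(\nu_2)=\frac2\pi\int_0^{\pi/2}\big(1-\cos(d\theta)\big)\Big[I_{2\nu}(y)+2\cos\theta\,I_{2\nu+1}(y)+I_{2\nu+2}(y)\Big]\,d\theta\ge0,
\]
because $1-\cos(d\theta)\ge0$ and the bracket is nonnegative on $[0,\pi/2]$ (the three modified Bessel functions are positive and $\cos\theta\ge0$; equivalently the bracket equals $2e^{-y\cos\theta}\tfrac{d}{dy}\big[e^{y\cos\theta}I_{2\nu+1}(y)\big]$). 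By continuity in $\nu$ this midpoint inequality upgrades to log-concavity of $g$, hence of $\Phi_\nu$; its three-term instance is $\Phi_\nu(x)^2>\Phi_{\nu-1}(x)\Phi_{\nu+1}(x)$, strict because $1-\cos(2\theta)=2\sin^2\theta\not\equiv0$.

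The main obstacle is the parameter range: Neumann's formula needs $\alpha+\beta>-1$ for each product, i.e. $\nu_1+\nu_2>-1$, so the computation above delivers log-concavity only where the midpoint exceeds $-\tfrac12$, and the three-term inequality only for $\nu>-\tfrac12$. The extension to $(-1,\infty)$ requires a supplementary argument on the borderline strip — this is where I expect to spend the most effort, presumably via the power-series expansion \eqref{power} or a limiting argument, possibly invoking the known log-concavity of $\nu\mapsto I_\nu(x)$ on $(-1,\infty)$. For the left-hand side of \eqref{turan} on $(-1,0]$, however, there is a cheap route: $g(\nu-1)-g(\nu)=\tfrac{2\nu}{x}I_\nu(x)\le0$ and $g(\nu)-g(\nu+1)=\tfrac{2(\nu+1)}{x}I_{\nu+1}(x)>0$ give, in the non-trivial case $g(\nu-1)>0$, that $0<g(\nu-1)\le g(\nu)$ and $0<g(\nu+1)<g(\nu)$, whence $g(\nu-1)g(\nu+1)<g(\nu)^2$. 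A routine matter in the first step is the legitimacy of differentiating under the integral sign, which follows from uniform domination of the integrands on compact $\nu$-intervals.
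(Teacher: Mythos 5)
Your handling of $\Psi_{\nu}$ is essentially the paper's own argument: the authors differentiate \eqref{modsum} directly in $\nu$, using $\log\frac{1}{1-t^2}\geq 0$ on $(-1,1)$, which is precisely your $-\ln\left((2-s)s\right)\geq 0$ after the substitution $s=1+t$; and the reduction of the right-hand side of \eqref{turan} to $\left[\Psi_{\nu}(x)\right]^2\leq\Psi_{\nu-1}(x)\Psi_{\nu+1}(x)$ via the Gamma recurrence is also what the paper does (you merely make the bookkeeping explicit). For the $\nu$-log-concavity of $\Phi_{\nu}$ you use the same key tool, Neumann's product formula, but organize it as a direct midpoint inequality $g(\nu)^2\geq g(\nu_1)g(\nu_2)$ rather than the paper's monotonicity of $\nu\mapsto\Phi_{\nu+b}(x)/\Phi_{\nu}(x)$; these are the same computation in different clothing. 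Where you genuinely diverge is the left-hand side of \eqref{turan} on $(-1,-\frac{1}{2}]$: your recurrence argument ($g(\nu-1)\leq g(\nu)$ and $0<g(\nu+1)<g(\nu)$ for $\nu\in(-1,0]$) is more elementary than the paper's, which decomposes the Tur\'anian of $\Phi$ into $\Delta_{\nu}+\Delta_{\nu+1}+\Theta_{\nu}$, cites the known Tur\'an inequality $\Delta_{\nu}(x)>0$ for $\nu>-1$, and proves $\Theta_{\nu}(x)>0$ by a single Neumann integral with index $2\nu+1>-1$ — a route that covers all of $(-1,\infty)$ in one stroke with no case split. Both routes are correct, and your two-case argument does deliver the left-hand side of \eqref{turan} for every $\nu>-1$.

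The genuine gap is the one you flagged yourself: the theorem asserts log-concavity of $\nu\mapsto\Phi_{\nu}(x)$ on $(-1,\infty)$, but Neumann's formula forces $\nu_1+\nu_2>-1$ in every product you use, so your computation yields log-concavity only on $\left(-\frac{1}{2},\infty\right)$, and the strip $(-1,-\frac{1}{2}]$ is deferred to an unspecified ``supplementary argument'' that is never carried out. As written, this part of the statement is not proved. You should know that the paper's proof carries exactly the same restriction from exactly the same source: the condition $2\nu+b+\varepsilon>-1$ means the ratio monotonicity is only available for $\nu\geq-(b+1)/2$, i.e.\ for increments $b$ bounded away from $0$ when $\nu\leq-\frac{1}{2}$, and the paper's jump from this to ``$\partial\log[\Phi_{\nu}(x)]/\partial\nu$ is decreasing on $(-1,\infty)$'' is likewise unjustified on $(-1,-\frac{1}{2}]$ (knowing $F(\nu+b)-F(\nu)$ is decreasing only for $b\geq 1$, say, does not control $F'$ pointwise). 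So you have correctly located the weak point of the theorem rather than overlooked it; but your proposal does not repair it, and none of the three escape routes you mention is executed. Everything else — complete monotonicity and log-convexity of $\nu\mapsto\Psi_{\nu}(x)$, both sides of \eqref{turan} on their stated ranges — is sound.
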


\begin{proof} First we show that for each fixed $b\in(0,2]$ and each $x>0,$ the
function $\nu\mapsto \Phi_{\nu+b}(x)/\Phi_{\nu}(x)$ is decreasing, where
$\nu\geq-(b+1)/2,$ $\nu>-1.$ For this, we consider Neumann's formula \cite[p. 441]{watsongn}
\begin{equation}\label{neumann}
I_{\alpha}(x)I_{\beta}(x)=\frac{2}{\pi}\int_0^{\frac{\pi}{2}}I_{\alpha+\beta}(2x\cos\theta)\cos\left((\alpha-\beta)\theta\right)d\theta,
\end{equation}
which holds for all $\alpha+\beta>-1.$ Using this we find that for
$2\nu+\varepsilon+b>-1$
\begin{align*}\alpha_{\nu}(x)&=I_{\nu}(x)I_{\nu+b+\varepsilon}(x)-I_{\nu+b}(x)I_{\nu+\varepsilon}(x)\\&=-\frac{4}{\pi}\int_{0}^{\frac{\pi}{2}}
I_{2\nu+b+\varepsilon}(2x\cos\theta)\sin(b\theta)\sin(\varepsilon\theta)d\theta,\end{align*}
\begin{align*}\beta_{\nu}(x)&=I_{\nu}(x)I_{\nu+b+\varepsilon+1}(x)-I_{\nu+b}(x)I_{\nu+\varepsilon+1}(x)\\&\ \ \ \ +
I_{\nu+1}(x)I_{\nu+b+\varepsilon}(x)-I_{\nu+b+1}(x)I_{\nu+\varepsilon}(x)\\
&=-\frac{8}{\pi}\int_{0}^{\frac{\pi}{2}}
I_{2\nu+b+\varepsilon+1}(2x\cos\theta)\sin(b\theta)\sin(\varepsilon\theta)\cos\theta d\theta,\end{align*}
which are negative for all $b\in(0,2]$ and $\varepsilon\in(0,2].$
Consequently, the expression
$$\Phi_{\nu}(x)\Phi_{\nu+b+\varepsilon}(x)-\Phi_{\nu+b}(x)\Phi_{\nu+\varepsilon}(x)=e^{-2x}x^{-(2\nu+b+\varepsilon)}
\left[\alpha_{\nu}(x)+\beta_{\nu}(x)+\alpha_{\nu+1}(x)\right]$$
is negative, that is, we obtain
$\Phi_{\nu+b+\varepsilon}(x)/\Phi_{\nu+\varepsilon}(x)<\Phi_{\nu+b}(x)/\Phi_{\nu}(x).$ Now, since $\nu\mapsto \Phi_{\nu+b}(x)/\Phi_{\nu}(x)$ is
decreasing, it follows that the function $\nu\mapsto
\log[\Phi_{\nu+b}(x)]-\log[\Phi_{\nu}(x)]$ is decreasing too. This implies
that the function $\nu\mapsto \partial\log[\Phi_{\nu}(x)]/\partial\nu$ is decreasing on
$(-1,\infty).$ Now, since $\nu\mapsto\Phi_{\nu}(x)$ is log-concave, it follows that for all $\nu_1,\nu_2>-1,$ $x>0$ and $\alpha\in[0,1]$ we have
$$\Phi_{\alpha\nu_1+(1-\alpha)\nu_2}(x)\geq \left[\Phi_{\nu_1}(x)\right]^{\alpha}\left[\Phi_{\nu_2}(x)\right]^{1-\alpha},$$
and choosing $\nu_1=\nu-1,$ $\nu_2=\nu+1$ and $\alpha=\frac{1}{2}$ we arrive at left-hand side of the Tur\'an type inequality \eqref{turan}, but just for $\nu>0.$ In what follows we show that this inequality is actually valid for all $\nu>-1.$ For this, observe that
$${}_{\Phi}\Delta_{\nu}(x)=\left[\Phi_{\nu}(x)\right]^2-\Phi_{\nu-1}(x)\Phi_{\nu+1}(x)=
e^{-2x}x^{-2\nu}\left[\Delta_{\nu}(x)+\Delta_{\nu+1}(x)+\Theta_{\nu}(x)\right],$$
where
$$\Delta_{\nu}(x)=\left[I_{\nu}(x)\right]^2-I_{\nu-1}(x)I_{\nu+1}(x)$$
and
$$\Theta_{\nu}(x)=I_{\nu}(x)I_{\nu+1}(x)-I_{\nu-1}(x)I_{\nu+2}(x).$$
Since $\Delta_{\nu}(x)>0$ for all $x>0$ and $\nu>-1$ (see for example \cite{bariczpams}), to prove the left-hand side of \eqref{turan}, we just need to show that the expression $\Theta_{\nu}(x)$ is positive. By using \eqref{neumann} we obtain
\begin{align*}\Theta_{\nu}(x)&=\frac{2}{\pi}\int_0^{\frac{\pi}{2}}I_{2\nu+1}(2x\cos\theta)\left(\cos\theta-\cos(3\theta)\right)d\theta\\
&=\frac{8}{\pi}\int_0^{\frac{\pi}{2}}I_{2\nu+1}(2x\cos\theta)(\cos\theta)\left(\sin^2\theta\right)d\theta>0\end{align*}
for all $\nu>-1$ and $x>0.$

Finally, since
$$(-1)^m\frac{\partial^m\Psi_{\nu}(x)}{\partial\nu^m}=\int_{-1}^1(1-t)(1-t^2)^{\nu-\frac{1}{2}}\left(\log\frac{1}{1-t^2}\right)^me^{-x(1+t)}dt>0$$
for all $\nu>-\frac{1}{2},$ $x>0$ and $m\in\{0,1,\dots\},$ indeed the function $\nu\mapsto \Psi_{\nu}(x)$ is completely monotonic and log-convex on $\left(-\frac{1}{2},\infty\right)$ for $x>0.$ Consequently, for all $\nu_1,\nu_2>-\frac{1}{2},$ $x>0$ and $\alpha\in[0,1]$ we have
$$\Psi_{\alpha\nu_1+(1-\alpha)\nu_2}(x)\leq \left[\Psi_{\nu_1}(x)\right]^{\alpha}\left[\Psi_{\nu_2}(x)\right]^{1-\alpha},$$
and choosing $\nu_1=\nu-1,$ $\nu_2=\nu+1$ and $\alpha=\frac{1}{2}$ we arrive at the Tur\'an type inequality
$$\left[\Psi_{\nu}(x)\right]^2-\Psi_{\nu-1}(x)\Psi_{\nu+1}(x)\leq0,$$
which is equivalent to the right-hand side of \eqref{turan}.
\end{proof}

\section{Remarks on Kanter's paper}
\setcounter{equation}{0}

In this section we would like to point out an error in Kanter's paper \cite{kanter}. Namely, in the beginning of the proof of \cite[Lemma 4.4]{kanter} the author claimed that the inequality
\begin{equation}\label{conjinteg}\frac{1}{\pi}\int_0^{\pi}e^{-2r(1-\cos t)}(1+\cos t)dt\geq \frac{1}{\pi}\int_0^{\pi}(\cos t)^{2r}(1+\cos t)dt,\end{equation}
that is,
\begin{equation}\label{conj}\Phi(2r)\geq p(2r)=C_{2r}^r2^{-2r},\end{equation}
is valid for $r$ nonnegative integer. However, the proof of the inequality \cite[p. 232]{kanter}
\begin{equation}\label{mono}\frac{\Phi(2r)}{p(2r)}\leq \frac{\Phi(2r+2)}{p(2r+2)}\end{equation}
is not correct, and hence the proof of \eqref{conj} is not complete. Namely, there is a typographical error in the definition of $p(2r)$ in \cite[p. 232]{kanter}, and in the proof of the above inequality the author used the inequalities
$$\frac{\Phi(2r)}{p(2r)}\leq \frac{2r+1}{2r+2}\sqrt{\frac{r+1}{r}}\cdot\frac{\Phi(2r+2)}{p(2r+2)}\leq\frac{\Phi(2r+2)}{p(2r+2)},$$
but it is easy to see that the second inequality is not true. In an effort to prove \eqref{mono} we deduced the parts {\bf e} and {\bf f} of Theorem 1, however, these do not help us in the proof of \eqref{mono}. More precisely, by using part {\bf f} of Theorem 1, we clearly have that
$$\Phi(s+t)\geq \sqrt{\frac{s+\frac{1}{4}}{s+t+\frac{1}{4}}}\Phi(s)$$
for all $s,t>0,$ and consequently
$$\frac{\Phi(2r)}{p(2r)}\leq \frac{2r+1}{2r+2}\sqrt{\frac{2r+2+\frac{1}{4}}{2r+\frac{1}{4}}}\cdot\frac{\Phi(2r+2)}{p(2r+2)},$$
however, this is not less or equal than
${\Phi(2r+2)}/{p(2r+2)}.$ Summarizing, the proof of \cite[Lemma 4.4]{kanter} is not complete, which implies that the proof of \cite[Lemma 4.3]{kanter}
is not complete too, and then the proof of \cite[Theorem 4.1]{kanter} is not correct. Fortunately, \cite[Theorem 4.1]{kanter} has been proved recently by Mattner and Roos \cite{roos} by using a different approach, and this in particular implies that the inequality \eqref{conj} is valid for $r\in\{0,1,2,\dots\}.$ More precisely, Kanter \cite[p. 222]{kanter} pointed out that if the convex set is $\{0\}$ and the random variables are considered to be $\pm 1$ valued random variables, then in particular \cite[Theorem 4.1]{kanter} implies \eqref{conj}.

Now, in what follows we present an analytic proof of the extension of \eqref{conj} to $r\geq0.$ Namely, we prove the following result.

\begin{theorem}
The extension of Kanter's inequality \eqref{conj} to real variable, that is,
   \begin{equation} \label{conj2}
      \Phi(2r)=e^{-2r}\left[I_0(2r)+I_1(2r)\right]\geq \frac{\Gamma(2r+1)}{\Gamma^2(r+1)}2^{-2r},
   \end{equation}
is valid for $r\geq0.$
\end{theorem}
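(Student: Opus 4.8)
The plan is to recast \eqref{conj2} as a comparison between two Laplace transforms and then invoke a single--sign--change argument. Put $x=2r$ and recall $\Phi=\Phi_0$ and, from \eqref{modsum}, $\Psi_0(x)=\sqrt\pi\,\Gamma\!\left(\tfrac12\right)\Phi_0(x)=\pi\Phi_0(x)$. By the Legendre duplication formula $\Gamma(x+1)=\tfrac{2^{x}}{\sqrt\pi}\,\Gamma\!\left(\tfrac{x+1}{2}\right)\Gamma\!\left(\tfrac x2+1\right)$, the right-hand side of \eqref{conj2} equals $\tfrac1\pi\,\Gamma\!\left(\tfrac12\right)\Gamma\!\left(\tfrac x2+\tfrac12\right)\big/\Gamma\!\left(\tfrac x2+1\right)=\tfrac1\pi B\!\left(\tfrac12,\tfrac x2+\tfrac12\right)$, so \eqref{conj2} is equivalent to $\Psi_0(x)\ge B\!\left(\tfrac12,\tfrac x2+\tfrac12\right)$ for all $x\ge0$. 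From \eqref{modsum} with $\nu=0$, the change of variable $u=1+t$ yields
\[
\Psi_0(x)=\int_0^2\sqrt{\frac{2-u}{u}}\,e^{-xu}\,{\rm d}u ,
\]
while in $B\!\left(\tfrac12,\tfrac x2+\tfrac12\right)=\int_0^1 s^{-1/2}(1-s)^{(x-1)/2}\,{\rm d}s$ the substitution $s=1-e^{-2u}$ gives
\[
B\!\left(\tfrac12,\tfrac x2+\tfrac12\right)=2\int_0^\infty\frac{e^{-xu}}{\sqrt{e^{2u}-1}}\,{\rm d}u .
\]

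Next I introduce the signed density $\rho(u)=\sqrt{(2-u)/u}-2/\sqrt{e^{2u}-1}$ for $u\in(0,2)$ and $\rho(u)=-2/\sqrt{e^{2u}-1}$ for $u\ge2$; the target becomes $\int_0^\infty e^{-xu}\rho(u)\,{\rm d}u\ge0$ for $x\ge0$. Evaluating at $x=0$ gives $\int_0^\infty\rho(u)\,{\rm d}u=\Psi_0(0)-B\!\left(\tfrac12,\tfrac12\right)=\pi-\pi=0$. The heart of the proof is to show that $\rho$ is positive on an initial interval $(0,c)$ and negative on $(c,\infty)$. For $u\ge2$ this is clear. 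For $u\in(0,2)$ both summands of $\rho$ are positive, so, after squaring and clearing denominators, ${\rm sign}\,\rho(u)={\rm sign}\,H(u)$, where $H(u)=(2-u)(e^{2u}-1)-4u=(2-u)e^{2u}-3u-2$. Here $H(0)=0$, $H'(0)=0$ and $H''(u)=4e^{2u}(1-u)$, hence $H'$ is increasing on $(0,1)$ — so $H'>0$ there — and strictly decreasing on $(1,2)$ from $H'(1)=e^2-3>0$ to $H'(2)=-e^4-3<0$, so $H'$ has a unique zero $u_1\in(1,2)$. Consequently $H$ increases on $(0,u_1)$ (hence $H>0$ there since $H(0)=0$) and decreases on $(u_1,2)$ with $H(2)=-8<0$, so $H$ has a unique zero $c\in(u_1,2)$, and $H>0$ on $(0,c)$, $H<0$ on $(c,2)$.

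Finally, using $\int_0^\infty\rho=0$ and the sign pattern just established, for every $x>0$
\[
\int_0^\infty e^{-xu}\rho(u)\,{\rm d}u=\int_0^\infty\bigl(e^{-xu}-e^{-xc}\bigr)\rho(u)\,{\rm d}u\ge0 ,
\]
because $e^{-xu}-e^{-xc}\ge0$ and $\rho(u)\ge0$ on $(0,c)$, while $e^{-xu}-e^{-xc}\le0$ and $\rho(u)\le0$ on $(c,\infty)$; at $x=0$ one has equality, consistent with $\Phi(0)=1$. This proves $\Psi_0(x)\ge B\!\left(\tfrac12,\tfrac x2+\tfrac12\right)$, i.e.\ \eqref{conj2} for $r\ge0$. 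I expect the only delicate point to be the sign analysis of $H$ — equivalently, confirming that the densities $\sqrt{(2-u)/u}$ and $2/\sqrt{e^{2u}-1}$ cross exactly once on $(0,2)$; the reductions are routine manipulations of Beta integrals, of the duplication formula, and of \eqref{modsum}. One should also record the harmless facts that $\rho(u)\to0$ as $u\to0^+$ (so $\rho$ is integrable near $0$) and $2/\sqrt{e^{2u}-1}=O(e^{-u})$ as $u\to\infty$, which guarantees convergence of all the integrals above.
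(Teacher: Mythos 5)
Your proof is correct, and it takes a genuinely different route from the paper. You recast both sides as Laplace transforms: $\Psi_0(x)=\int_0^2\sqrt{(2-u)/u}\,e^{-xu}\,{\rm d}u$ from \eqref{modsum} and ${\rm B}\left(\tfrac12,\tfrac x2+\tfrac12\right)=2\int_0^\infty e^{-xu}(e^{2u}-1)^{-1/2}\,{\rm d}u$, check that the signed density $\rho$ has total mass zero (both sides equal $\pi$ at $x=0$), and show $\rho$ changes sign exactly once via the elementary analysis of $H(u)=(2-u)e^{2u}-3u-2$ (your computations $H(0)=H'(0)=0$, $H''(u)=4e^{2u}(1-u)$, $H'(1)=e^2-3>0$, $H'(2)=-e^4-3<0$, $H(2)=-8$ all check out, as do the reductions via the duplication formula and the substitutions $u=1+t$ and $s=1-e^{-2u}$). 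The concluding step $\int_0^\infty(e^{-xu}-e^{-xc})\rho(u)\,{\rm d}u\ge0$ is the standard single-crossing argument. The paper instead writes both sides as trigonometric integrals $\frac1\pi\int_0^\pi[\cdot](1+\cos t)\,{\rm d}t$, applies Okamura's variant of the second integral mean value theorem to strip off the monotone factor $1+\cos t$, and then handles the resulting integral $\int_{\cos\xi}^1\left[e^{2r(t-1)}-t^{2r}\right](1-t^2)^{-1/2}\,{\rm d}t$ by a two-case analysis using $e^{t-1}\ge t$ and a reflection argument for negative $t$. Your approach buys several things: it needs no mean value theorem with an undetermined point $\xi$, it sidesteps the interpretation of $(\cos t)^{2r}$ for $\cos t<0$ and non-integer $r$ (which the paper's integrand implicitly requires), and it yields strict inequality for $r>0$ essentially for free; the paper's argument is shorter on the computational side but leans on a less standard auxiliary theorem. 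Your closing remarks on integrability of $\rho$ near $0$ and its exponential decay at infinity are the right things to record and are easily verified.
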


\begin{proof}
Observe that to prove \eqref{conj2} it is enough to show that \eqref{conjinteg} is valid for all $r\geq0.$ The case $r=0$ is obvious, and thus in what follows we assume that $r>0.$ First we consider the integral
   \[ \varphi(r) = \dfrac1\pi\, \int_0^\pi (\cos t)^{2r} (1+ \cos t)dt\]
and we show that the right-hand side of \eqref{conjinteg} and \eqref{conj2} coincide. Substituting $x = \cos t$ we get
   \[ \varphi(r) = \dfrac2\pi \int_0^1 \dfrac{x^{2r}}{\sqrt{1-x^2}}dx
           = \dfrac1\pi {\rm B}\left( r + \frac12, \frac12\right)
           = \dfrac{\Gamma\left(r+\frac12\right)}{\sqrt{\pi} \Gamma(r+1)}.\]
Making use of the Legendre duplication formula, $$\Gamma(2z) = \pi^{-\frac{1}{2}}2^{2z-1}\Gamma(z) \Gamma\left(z+\frac12\right)$$ for
$z=r+\frac{1}{2}$, we immediately conclude that
   \[ \varphi(r) = \dfrac{\Gamma(2r+1)}{\Gamma^2(r+1)} {2^{-2r}},\]
as we required.

Now, let us consider the integral
   \[ A(r) = \dfrac1\pi\,\int_0^\pi \left[ {\rm e}^{2r(\cos t -1)}-(\cos t)^{2r}\right](1+\cos t)\,{\rm d}t.\]
To prove \eqref{conjinteg} we show that $A(r)$ is positive for $r>0.$ For this we shall use the Okamura's variant of the second integral mean--value theorem \cite{matsumoto,okamura}, which states that if
$f:[a,b] \to \mathbb R$ is a monotone function and $g:[a,b]\to \mathbb R$ is integrable, then there exists a $\xi \in [a,b]$ such that
   \begin{equation} \label{oka}
      \int_a^b f(t)g(t)dt = f(a^+) \int_a^\xi g(t)dt + f(b^-)\int_\xi^b g(t)dt\,.
   \end{equation}
Choosing $[a,b]=[0, \pi],$ $t\mapsto f(t) = 1+\cos t,$ which is monotone (decreasing) and $t\mapsto g(t) = {\rm e}^{2r(\cos t -1)}-(\cos t)^{2r},$ which is
integrable, for a fixed $0 \le \xi \le \pi$, by \eqref{oka} we have
   \[ A(r) = \frac{2}{\pi}\int_0^\xi \left[ {e}^{2r(\cos t -1)} - (\cos t)^{2r}\right]dt
               = \frac{2}{\pi}\int_{\cos \xi}^1 \left[ {e}^{2r(t -1)} - t^{2r}\right]\dfrac{dt}{\sqrt{1-t^2}}.\]
Let $h_r(t)$ denote the integrand of the last integral, that is,
   \[ h_r(t) = \dfrac{{\rm e}^{2r(t -1)} - t^{2r}}{\sqrt{1-t^2}}.\]
Now, we consider two cases. The first case is when $\xi \in \left[0, \frac{\pi}{2}\right).$ In this case the integrand $h_r(t)$ is positive for all $r>0$,
since ${e}^{t-1} \ge 1+(t-1) = t > 0$ for $t\in [\cos \xi, 1]$. Consequently, we have $A(r)>0$ for $r>0.$ The second case is when $\xi \in \left[\frac{\pi}{2}, \pi\right],$ that is, $-1 \le
\cos \xi \le 0.$ Observe that in this case we have
   \[ A(r) = \dfrac2\pi\, \left[ \int_{\cos \xi}^0h_r(t)dt + \int_0^1h_r(t)dt\right] =
            A_\xi + B,\]
and since $t\mapsto {e}^{2r(t-1)}$ is monotone increasing on $\mathbb R$ for $r>0,$ and $t\mapsto t^{2r}(1-t^2)^{-\frac12}$ is an even function, the
first integral's modulus $|A_\xi|$ cannot overgrow $B$. Indeed,
   \begin{align*}
      \dfrac\pi2\, |A_\xi| &= \int_0^{-\cos \xi} h_r(-t)\,{\rm d}t < \int_0^{-\cos \xi} h_r(t)dt\\
                           &\le \int_0^{-\cos \xi} h_r(t)\, {\rm d}t + \int_{-\cos \xi}^1 h_r(t)dt
                            = \dfrac\pi2\,B\, .
   \end{align*}
Hence $A(r)$ is nonnegative for all $r>0.$ This completes the proof.\end{proof}

\section*{Open Problem} Finally, motivated by the results of section 2 we pose the following problem: find a generalization of Kanter's inequality \eqref{conj2} for $\Phi_{\nu}.$

\end{document}